\newtheorem{theorem}{Theorem}
\newtheorem*{erd}{\erdos-Rado theorem}
\newtheorem{lemma}[theorem]{Lemma}
\def \N {{{\mathbb N}}}
\newcommand{\Q}{{\mathbb Q}}
\newcommand{\bo}{{\beth_\omega}}
\newcommand{\vs}{{\Q^\bo}}
\newcommand{\erdos}{{Erd\H os}}
\title{Monochromatic Infinite Sumsets}
\author{Imre Leader\footnote{Centre for Mathematical Sciences,
Wilberforce Road,
Cambridge CB3 0WB,
UK,  {\tt I.Leader@dpmms.cam.ac.uk}}\and 
Paul A.~Russell\footnote{Churchill College, Cambridge CB3 0DS, UK,
  {\tt P.A.Russell@dpmms.cam.ac.uk}}}
\begin{document}
\maketitle

\begin{abstract}
We show that there is a rational vector space $V$ such that, whenever $V$ is 
finitely coloured, there is an infinite set $X$ whose sumset $X+X$ is 
monochromatic. 
Our example is the rational vector space of dimension 
$\sup\{\aleph_0,2^{\aleph_0},2^{2^{\aleph_0}},\ldots\,\}$. This complements a
result of
Hindman, 
Leader and Strauss, who showed that the result does not hold for dimension
below $\aleph_\omega$. So our result is best possible under GCH.
\end{abstract}

\begin{section}{Introduction}
It is a well-known consequence of Ramsey's theorem that, whenever the 
naturals are finitely coloured, there is an infinite set $X$ such that all 
pairwise sums of distinct elements of X have the same colour. If one asks 
for a stronger conclusion, that the entire sumset $X+X = \{x+y: x,y \in X\}$ 
is 
monochromatic, then the answer is no: this is because such a sumset automatically 
contains two numbers with one roughly twice the other, and this can easily be ruled
out by a suitable 3-colouring (see e.g.~\cite{h}).

We mention in passing that it is, surprisingly, unknown as to whether or not
this can be achieved with a 2-colouring: this is called Owings' problem
\cite{owings}. For background on this, and other results mentioned in this
introduction, see \cite{hls} -- although we mention that this paper is
self-contained and does not rely on any results from \cite{hls}. 

What happens if one passes to a larger ambient space, for example the 
rationals? Here again, the answer is no: there is a finite colouring of $\Q$ with 
no infinite sumset monochromatic (see e.g.~\cite{hls}). What about for the reals?

Hindman, Leader and Strauss \cite{hls} 
showed that, for every rational vector 
space of dimension smaller that $\aleph_\omega$, there is a finite 
colouring without an infinite monochromatic sumset. Note that this 
establishes the answer for the reals if we assume CH. (It is still unknown 
if the reals have such a bad colouring if we do not make extra 
set-theoretic assumptions.) However, they were unable to find a vector 
space with 
the positive property (of having no bad colourings). 

Our aim in this paper is to show that such a vector space does exist. 
We show that this is the case for any dimension that is at least 
$\beth_\omega$ (read `beth-omega'), which is defined to be 
$\sup\{\aleph_0,2^{\aleph_0},2^{2^{\aleph_0}},\ldots\,\}$.
Note that if we assume GCH then this is 
exactly $\aleph_\omega$, which would be best possible in light of the result 
of \cite{hls}. 
We do not know if the vector space of dimension $\aleph_\omega$ 
has this property if we do not assume GCH.

We also prove a similar result for multiple sums such as $X+X+X$ and so on.
The proof involves a perhaps unexpected use of the Hales-Jewett theorem.

For a finite or infinite cardinal $\kappa$, we write $\Q^\kappa$ to denote
the vector space of dimension $\kappa$ over $\Q$.  That is, $\Q^\kappa$
is the direct \emph{sum} of $\kappa$ copies of $\Q$, not the direct product.
We shall take $\Q^\kappa$ to come equipped with a basis $e_0$, $e_1$,
$e_2$, $\ldots\,$ that is well-ordered by the smallest ordinal of cardinality
$\kappa$.
\end{section}

\begin{section}{Main Result}

Consider $\vs$, the $\bo$-dimensional vector space over $\Q$.  As remarked
above, we shall consider $\vs$ to come equipped with a well-ordered basis
$B$ whose elements we shall denote by $e_0$, $e_1$, $e_2$, $\ldots\,$.

Suppose $x\in\vs$ with $x\ne0$.  We may write $x$ in terms of the basis $B$
and delete all zero entries to obtain a finite list of non-zero rationals.
We call this list the \emph{pattern} of $x$.  More formally, given a non-zero
$x\in\vs$, there is a unique way to express $x$ in the form $x=\sum_{i=1}^n
x_ie_{\alpha_i}$ where $n$ is a positive integer, each $x_i$ is a non-zero
rational and $\alpha_1<\alpha_2<\cdots<\alpha_n$ are ordinals.  The 
\emph{pattern} of $x$ is $(x_1,x_2,\ldots\,,x_n)$.  We shall often
denote the pattern $(x_1,x_2,\ldots\,,x_n)$ simply by $x_1x_2\ldots x_n$.
We say that the pattern $x_1x_2\ldots x_n$ has \emph{length} $n$ and
write $\ell(x_1x_2\ldots x_n)=n$.

Given a finite colouring of $\vs$, we seek an infinite set $X\subset\vs$ with 
$X+X$ monochromatic.  There are two stages to the proof.

We first show (Lemma \ref{patterns}) that, given a 
finite set $\Pi$ of patterns, there is a large
subspace of $\vs$ on which the colour of an $x$ with pattern in $\Pi$ depends
only on the pattern.  The subspace produced is spanned by a subset of the
original basis $B$ of $\vs$.  This part of the proof is a 
fairly standard application of the \erdos-Rado
theorem \cite{er}.

The heart of the proof comes in the second stage.  The main obstacle to
overcome is to determine how we should proceed following the reduction
given by Lemma \ref{patterns}.  That is to say, which patterns should we
consider and how do we force all the elements of $X+X$ to have the desired
pattern or patterns?  While we are able to work within a subspace spanned
by a countable subset $A\subset B$, it is interesting to note that our proof
often requires this subset $A$ to have an order-type greater than $\omega$.
We therefore ask the subspace produced in Lemma \ref{patterns} to have
dimension $\aleph_1$; this allows us to always find $A$ as required.

We now proceed to the first of the two stages detailed above.  First,
we recall the \erdos-Rado theorem. As usual, we denote by $\exp_r(\kappa)$
the $r$-fold exponential of $\kappa$, i.e.~$\exp_0(\kappa)=\kappa$ and
$\exp_{r+1}(\kappa)=2^{\exp_r(\kappa)}$.

\begin{erd}[\cite{er}]
Let $r$ be a non-negative integer and let $\kappa$ be an infinite
cardinal.  Suppose the $(r+1)$-element subsets of a set of 
cardinality $\exp_r(\kappa)^+$ are coloured with $\kappa$ colours.
Then there is a subset of cardinality $\kappa^+$ all of whose
$(r+1)-element$ subsets are the same colour.
\end{erd}

In particular, this immediately implies that for every positive
integer $r$, if the $r$-element subsets of a set of cardinality
$\bo$ are coloured with finitely many colours then there is a subset
of cardinality $\aleph_1$ all of whose $r$-element subsets are the
same colour. 

\begin{lemma}\label{patterns}
Let $k$ be a positive integer
and suppose $\vs$ is $k$-coloured.  Let $\Pi$ be a finite
set of patterns.  Then there is a subset $A\subset B$ of cardinality
$\aleph_1$ such that for each $\pi\in\Pi$ the set
$$\{x\in\vs:x\hbox{\rm\ is in the span of $A$ and has pattern }\pi\}$$
is monochromatic.
\end{lemma}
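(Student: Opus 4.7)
The plan is a single application of the \erdos-Rado consequence stated above.  Let $N=\max_{\pi\in\Pi}\ell(\pi)$.  To each $N$-element subset $\{e_{\alpha_1},\ldots,e_{\alpha_N}\}$ of $B$ (with $\alpha_1<\cdots<\alpha_N$) I associate the tuple recording, for each $\pi=x_1\ldots x_n\in\Pi$ and each $I=\{i_1<\cdots<i_n\}\subseteq\{1,\ldots,N\}$ (so $n=\ell(\pi)$), the colour of $\sum_{j=1}^n x_j e_{\alpha_{i_j}}$.  Since $\Pi$ is finite, this is a colouring of $[B]^N$ with finitely many colours, and so the \erdos-Rado consequence yields a subset $A_0\subseteq B$ of cardinality $\aleph_1$ on which this combined colouring takes a single value $(c(\pi,I))_{\pi,I}$.

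Now $A_0$ inherits a well-order from $B$, and being of cardinality $\aleph_1$ it contains a subset $A$ of order type $\omega_1$; this $A$ is still monochromatic for the combined colouring, and I claim it is the subset required by the lemma.  Fix $\pi\in\Pi$ with $\ell(\pi)=n$ and let $x=\sum_{j=1}^n x_j e_{\beta_j}$ (with $\beta_1<\cdots<\beta_n$ and each $e_{\beta_j}\in A$) be an arbitrary element of the span of $A$ with pattern $\pi$.  Since $A$ has order type $\omega_1$, it contains uncountably many elements of index greater than $\beta_n$; choose any $N-n$ of them and adjoin them to $T=\{e_{\beta_1},\ldots,e_{\beta_n}\}$ to form an $N$-subset $S\subseteq A$ in which $T$ sits at positions $\{1,\ldots,n\}$.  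The $(\pi,\{1,\ldots,n\})$-entry of the colour tuple of $S$ is then on one hand equal to $c(\pi,\{1,\ldots,n\})$ (by monochromaticity of the combined colouring on $A$) and on the other hand equal to the colour of $x$ (by construction of $S$).  So every element of the span of $A$ with pattern $\pi$ has colour $c(\pi,\{1,\ldots,n\})$, a value depending only on $\pi$.

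The point requiring a little care is the reduction to order type $\omega_1$: a priori the \erdos-Rado set is just a set of cardinality $\aleph_1$, and without some control on its order type one cannot be sure of extending a given $n$-subset upwards to an $N$-subset within $A$.  The rest is routine bookkeeping, the key observation being that it suffices to apply \erdos-Rado \emph{once}, at the largest block size $N$, by recording the colours of all the relevant sub-elements in parallel.
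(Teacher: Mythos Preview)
Your proof is correct and follows essentially the same strategy as the paper's: a single application of the \erdos-Rado consequence at the maximum pattern length $N$, with a product colouring recording the colours of all the relevant shorter vectors simultaneously.

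The one technical difference worth noting is in how shorter patterns are embedded into $N$-sets. You place the pattern at the \emph{leftmost} positions and extend the supporting set \emph{upward}, which is why you need to pass to a subset of order type $\omega_1$ (and you rightly flag this as the delicate point). The paper instead pads each pattern $\pi$ on the \emph{left} with $N-\ell(\pi)$ zeros, so the genuine entries occupy the rightmost positions; it then simply discards the $N$ least elements of the \erdos-Rado set $A'$ to obtain $A$. Any $n$-subset of $A$ can then be extended \emph{downward} to an $N$-subset of $A'$ using those discarded elements, so no control on the order type of $A$ is needed. Incidentally, your colouring records all position sets $I$, but you only ever invoke $I=\{1,\ldots,n\}$; this redundancy is harmless.
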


\begin{proof}
Let $c$ be the given $k$-colouring of $\vs$.

Let $r$ be the length of the longest pattern in $\Pi$.  Let
$$\Pi'=\{\underbrace{00\ldots0}_{r-\ell(\pi)}\pi:\pi\in\Pi\}.$$

Write $\Pi'=\{\pi^{(1)},\pi^{(2)},\ldots\,,\pi^{(n)}\}$.  
We define $n$ $k$-colourings
$c_1$, $c_2$, $\ldots\,$, $c_n$ of the $r$-element subsets of $B$ as follows.
Given $S\subset B$ with $|S|=r$, write $S=\{e_{\alpha_1},e_{\alpha_2},\ldots\,,
e_{\alpha_r}\}$ with $\alpha_1<\alpha_2<\cdots<\alpha_r$.  Then set
$$c_i(S)=\sum_{j=1}^r \pi^{(i)}_je_{\alpha_j}.$$  Now define a single 
$k^n$-colouring $c'$ of the $r$-element subsets of $B$ by
$$c'(S)=(c_1(S),c_2(S),\ldots\,,c_n(S)).$$

We apply the \erdos-Rado theorem to this final colouring $c'$ to obtain
$A'\subset B$ with all $r$-subsets of $A'$ the same colour and 
$|A'|=\aleph_1$.  Removing the $r$ least elements of $A'$,
 we obtain our set $A$ as required.
\end{proof}

We are now ready to proceed to the main part of the proof.

\begin{theorem}\label{main}
Let $k$ be a positive integer, and suppose $\vs$ is $k$-coloured. 
 Then there is an
infinite set $X\subset\vs$ such that the sumset $X+X$ is monochromatic.
\end{theorem}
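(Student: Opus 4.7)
The plan is to apply Lemma~\ref{patterns} to a finite set of patterns $\Pi$, chosen in advance to contain every pattern that will arise from the intended construction. This produces a subset $A\subset B$ of cardinality $\aleph_1$ such that, for every $\pi\in\Pi$, the colour of an element supported in $A$ with pattern $\pi$ depends only on~$\pi$; in effect, the problem is reduced to a finite colouring of a (countable) set of patterns. Because $A$ has cardinality $\aleph_1$ it has order-type at least $\omega_1$, so I would further extract a countable subset $A'=\{a_0,a_1,\ldots\,\}\cup\{b_0,b_1,\ldots\,\}$ of~$A$ of order-type $\omega\cdot 2$, in which every $a_i$ precedes every $b_j$, and work entirely within the span of~$A'$.

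The naive attempt $x_i=e_{\alpha_i}$ gives $X+X$ just the two patterns $(2)$ and $(1,1)$, which a generic pattern-colouring is free to separate, so this cannot be enough. To create more room I would look for $X$ of the form $x_i=y_i+u$, where $u$ is a fixed vector supported on the ``top'' block $\{b_j\}$ and $y_i$ varies with support in the ``bottom'' block $\{a_i\}$. With this set-up, $2x_i=2y_i+2u$ and $x_i+x_j=y_i+y_j+2u$ both carry the same doubled tail~$2u$, so the patterns that appear in $X+X$ share a common suffix (the pattern of~$2u$), and the problem reduces to arranging that the head patterns, coming from $2y_i$ and from $y_i+y_j$, are assigned the same colour. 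This is where the order-type $\omega\cdot 2$ of~$A'$ is used: the top block must lie entirely above the bottom block so that the tail really does appear as a suffix of every pattern in $X+X$.

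The main obstacle is that $2y_i$ and $y_i+y_j$ typically have patterns of different lengths, so a generic pattern-colouring is under no obligation to identify them. To attack this I would exploit the freedom in the tail~$u$: each admissible pattern for $u$ gives rise to a ``prepend~$2u$'' map on colours of head patterns, and together with the freedom to vary the pattern of the~$y_i$, a pigeonhole argument over countably many candidate tails combined with infinite Ramsey applied to pairs of indices of the~$y_i$ should force the two relevant head patterns to acquire the same colour on an infinite subfamily. The whole iteration is guided by having built the necessary patterns into $\Pi$ at the outset, so that Lemma~\ref{patterns} is invoked only once. I expect the technical heart of the proof to lie in choosing $\Pi$, the tail $u$ and the family of $y_i$'s in a coordinated way so that the colour alignments secured at one stage are preserved by the later restriction steps; conceivably this will require $A'$ to have order-type somewhat larger than $\omega\cdot 2$ to accommodate several layers of ``tails''.
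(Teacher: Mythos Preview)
Your framework is correct up to the sentence ``a pigeonhole argument over countably many candidate tails combined with infinite Ramsey\ldots should force the two relevant head patterns to acquire the same colour''. That is precisely the step that carries all the content, and as written it does not work. You cannot range over countably many tails, because $\Pi$ must be fixed and finite before Lemma~\ref{patterns} is invoked, and you have (rightly) committed to invoking it only once. More seriously, infinite Ramsey on pairs of indices is empty here: once Lemma~\ref{patterns} has been applied, the colour of $x_i+x_j$ already depends only on its pattern, so for any fixed shape of the $y_i$ and any fixed $u$, all off-diagonal sums $x_i+x_j$ ($i\neq j$) automatically share one colour and all diagonal sums $2x_i$ share another. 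The entire problem is to make those two colours coincide, and Ramsey on the index set has no leverage on that question; nor does appending a common tail $2u$, since it shifts diagonal and off-diagonal patterns in the same way.

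What is missing is a concrete finite list of patterns on which pigeonhole against the $k$ colours genuinely bites. The paper takes $\pi_a=\underbrace{2\cdots2}_{a}\,\underbrace{1\cdots1}_{2(k-a)}$ for $a=0,1,\ldots,k$; these are $k+1$ patterns, so two of them, say $\pi_a$ and $\pi_b$ with $a<b$, receive the same colour. One then uses \emph{three} blocks rather than two: a fixed head of $a$ basis vectors with coefficient~$1$, then $b-a$ variable stretches (each of order-type~$\omega$) in which $x_i$ carries a single~$1$ at position~$i$, then a fixed tail of $2(k-b)$ basis vectors with coefficient~$\tfrac12$. With this choice $x_i+x_j$ has pattern $\pi_a$ when $i\neq j$ and $\pi_b$ when $i=j$, and both are the same colour by construction. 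Your two-block picture $y_i+u$ with a single variable stretch corresponds only to the special case $a=0$, $b=1$; in general the required order-type is $\omega(b-a+2)$, which may be as large as $\omega(k+2)$, and both a fixed prefix and a fixed suffix are needed.
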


\begin{proof}
Let $c$ be the given $k$-colouring of $\vs$.

For $a=0$, $1$, $2$, $\ldots\,$, $k$, let $\pi_a$ be the pattern
$$\pi_a=\underbrace{22\ldots2}_{a}\underbrace{11\ldots1}_{2(k-a)}$$
and let $\Pi=\{\pi_a:0\leqslant a\leqslant k\}$.  By Lemma \ref{patterns},
we can find $A\subset B$ with $|A|=\aleph_1$  and colours $c_a$ 
($0\leqslant a\leqslant k$) such that if $x$ is in the span of $A$ and
has pattern $\pi_a$ then $c(x)=c_a$.  By the pigeonhole principle,
we must have $c_a=c_b$ for some $a$ and $b$ with $0\leqslant a<b
\leqslant k$.

Let $C$ be a subset of $A$ of order-type $\alpha=\omega (b-a+2)$ 
and
list the elements of $C$ in order as $f_0$, $f_1$, $f_2$, $\ldots\,$.

Now let $X=\{x_i:i<\omega\}$, where, for each $i<\omega$, we define
$$x_i=\sum_{r=0}^{a-1}f_r+\sum_{r=1}^{b-a}f_{\omega r+i}+
\sum_{r=0}^{2(k-b)-1}\frac{1}{2}f_{\omega (b-a+1)+r}.$$
Then for all $i$, $j\in\N$, we observe that $x_i+x_j$ has pattern
$\pi_a$ or $\pi_b$ according as $i\ne j$ or $i=j$.  Thus $X+X$ is
monochromatic, as claimed.
\end{proof}

\end{section}

\begin{section}{Extensions}
There are two obvious directions in which one might seek to extend
Theorem \ref{main}.

First, what if instead of simply requiring that $X$ be infinite, we
seek an $X$ of cardinality $\aleph_1$, say, or of some larger specified
cardinality?  This is possible if we start with a vector space of sufficiently
large cardinality, and requires only a trivial modification to the proof of
Theorem \ref{main}.

\begin{theorem}
Let $k$ be a positive integer and let $\kappa$ 
be an infinite cardinal.  Then there is an
infinite cardinal $\lambda$ such that whenever the $\lambda$-dimensional
rational vector space $\Q^\lambda$ is $k$-coloured, there is a subset
$X\subset\Q^\lambda$ with $|X|=\kappa$ and $X+X$ monochromatic.
\end{theorem}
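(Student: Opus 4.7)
The plan is to imitate the proof of Theorem~\ref{main} verbatim, replacing $\aleph_1$ throughout with $\kappa^+$ and replacing the index set $\omega$ of $X$ with $\kappa$.  Since the patterns that will arise have length at most $2k$, I would take $\lambda=\exp_{2k-1}(\kappa)^+$: the \erdos-Rado theorem then guarantees that any colouring of the $2k$-element subsets of a set of cardinality $\lambda$ with finitely many (or even $\kappa$) colours admits a monochromatic subset of cardinality $\kappa^+$.

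With this choice of $\lambda$, the proof of Lemma~\ref{patterns} goes through unchanged to give, for any $k$-colouring of $\Q^\lambda$ and any finite set $\Pi$ of patterns, a subset $A\subset B$ with $|A|=\kappa^+$ on whose span the colour of any element with pattern in $\Pi$ depends only on its pattern.  I would then mimic the construction in the proof of Theorem~\ref{main} with the same family $\Pi=\{\pi_a:0\leqslant a\leqslant k\}$; pigeonhole gives $0\leqslant a<b\leqslant k$ with $c_a=c_b$.  Choose $C\subset A$ of order-type $\kappa(b-a+2)$---an ordinal of cardinality $\kappa<\kappa^+$, so $A$ certainly contains a subset of this order-type---and enumerate $C$ in order as $(f_\gamma)_{\gamma<\kappa(b-a+2)}$.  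For each $i<\kappa$ define
$$x_i=\sum_{r=0}^{a-1}f_r+\sum_{r=1}^{b-a}f_{\kappa r+i}+\sum_{r=0}^{2(k-b)-1}\frac{1}{2}f_{\kappa(b-a+1)+r},$$
and set $X=\{x_i:i<\kappa\}$.

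The pattern computation is identical to that in the proof of Theorem~\ref{main}: the positions $\kappa r+i$ (for $1\leqslant r\leqslant b-a$ and $i<\kappa$) lie in the interval $[\kappa,\kappa(b-a+1))$ and are pairwise distinct as $(r,i)$ varies, so for $i\ne j$ the element $x_i+x_j$ has pattern $\pi_a$, while $x_i+x_i=2x_i$ has pattern $\pi_b$.  Hence $|X|=\kappa$ and $X+X$ is monochromatic.  There is essentially no obstacle to the argument; the only things to check are the ordinal bookkeeping (exactly as in the original proof, with $\omega$ replaced by $\kappa$) and the cardinality of the order-type of $C$, which is why the authors describe this as a trivial modification.
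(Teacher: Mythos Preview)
Your proposal is correct and follows exactly the ``trivial modification'' the paper has in mind: replace $\aleph_1$ by $\kappa^+$ in Lemma~\ref{patterns} and $\omega$ by $\kappa$ in the construction of $X$, with the ordinal bookkeeping carried through verbatim. The only difference is cosmetic: you take $\lambda=\exp_{2k-1}(\kappa)^+$, which is sharper for a fixed $k$, whereas the paper opts for the $k$-independent $\lambda=\sup\{\kappa,2^\kappa,2^{2^\kappa},\ldots\}$ so that a single $\lambda$ works for every number of colours.
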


Indeed, with a similar application of the \erdos-Rado theorem as above, we may take
$$\lambda=\sup\{\kappa,2^\kappa,2^{2^\kappa},\ldots\,\}.$$

More interestingly, what if rather than simply looking for the sumset
$X+X$ we seek a monochromatic sum of many copies of $X$?  For example,
define the \emph{triple sumset} of $X$ to be 
$$X+X+X=\{x+y+z:x,y,z\in X\}.$$
If we finitely colour $\vs$, can we always find an infinite $X\subset\vs$
with $X+X+X$ monochromatic?

Let us first consider informally how one might
try to extend the proof of Theorem \ref{main} to deal with this problem.
Previously, we split our basis vectors into ``stretches'' of length $\omega$.
Depending on the colouring, we then defined each $x_i$ to either take value
$\frac{1}{2}$ or $1$ on certain fixed coordinates in the stretch
(a ``fixed stretch''),
or we defined each $x_i$ to take value 1 on coordinate $i$ of the stretch and
0 elsewhere (a ``variable stretch'').  
This resulted in $x_i+x_j$ always having a pattern consisting
of 1's and 2's.  More precisely, the pattern on a given fixed stretch is always
the same, whereas the pattern on a variable stretch could be either 
$11$ or $2$.

Now, suppose we consider $x_h+x_i+x_j$ with a similar definition of the
$x_i$.  The variable stretches will now have pattern $111$ or $21$ or $12$
or $3$.  To deal with this, it turns out that we need 
a somewhat unexpected application of the Hales-Jewett Theorem \cite{hj}.

\begin{theorem}
Let $k$ and $t$ be positive integers and suppose $\vs$ is $k$-coloured.  
Then there is an 
infinite set $X\subset\vs$ such that $\underbrace{X+X+\cdots+X}_t$ is
monochromatic.
\end{theorem}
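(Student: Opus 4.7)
I will follow the architecture of Theorem~\ref{main}. Each $x_i\in X$ should be supported on a fixed finite collection of ``stretches'' of a countable subset of $A\subset B$, where Lemma~\ref{patterns} is applied to a suitable finite set $\Pi$ of patterns. Stretches are either \emph{fixed} (every $x_i$ contributes the same prescribed values at a designated finite set of positions) or \emph{variable} (the $i$-th element places its mass at the $i$-th position of the stretch). In a $t$-fold sum $x_{i_1}+\dots+x_{i_t}$, each variable stretch then contributes a pattern that is precisely a composition of $t$ --- encoding the multiplicities of coincidences among $(i_1,\dots,i_t)$ listed in increasing order of the index. Since there are $2^{t-1}$ compositions of $t$, and $k$ can exceed this, the single pigeonhole step that sufficed for $t=2$ is no longer enough.

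The plan is therefore to take $N$ equal to the Hales--Jewett number $HJ(t,k)$ and to use $N$ variable stretches. To each word $w\in[t]^N$ I associate a vector $y_w\in\vs$ whose contribution on the $s$-th variable stretch is placed at the position dictated by the letter $w_s$. The colouring $c$ then induces a $k$-colouring of $[t]^N$ via $w\mapsto c(y_w)$, and Hales--Jewett supplies a monochromatic combinatorial line $\{\mathbf{w}^{(\lambda)}:\lambda\in[t]\}$. The key payoff is that for any $(\lambda_1,\dots,\lambda_t)\in[t]^t$, the $t$-fold sum $y_{\mathbf{w}^{(\lambda_1)}}+\dots+y_{\mathbf{w}^{(\lambda_t)}}$ has, on each constant coordinate of the line, a single fixed pattern, while on each wildcard coordinate its pattern is the composition of $t$ coming from the multiset $\{\lambda_1,\dots,\lambda_t\}$. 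Thus every composition of $t$ is realised somewhere along the line, and by Lemma~\ref{patterns} all such sums have a common colour.

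To extend the $t$-element line to an infinite $X$, I exploit the order-type $\omega$ of each variable stretch. The position occupied within any stretch may be shifted countably many times without altering the pattern, so I obtain a sequence $(x_i)_{i<\omega}$ of translates of the line-vectors whose $t$-fold sums realise precisely the same patterns as the sums along the Hales--Jewett line, hence share the common colour. (As in the proof of Theorem~\ref{main}, taking the relevant subset of $A$ to have order-type slightly larger than $\omega$ gives enough room to carry out these shifts cleanly.)

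The main obstacle is calibrating the Hales--Jewett step so that a single monochromatic line witnesses all $2^{t-1}$ compositions of $t$ simultaneously, and so that every $t$-fold sum one needs to control has its pattern already in the pre-chosen finite set $\Pi$. This may require an alphabet larger than $[t]$ --- for instance, one indexed by the compositions of $t$ themselves --- or an iterated application of Hales--Jewett, so that the coding $w\mapsto y_w$ makes every sum-type visible \emph{within} the line rather than merely as a sum of line-vectors. Getting this alignment right is the ``unexpected'' combinatorial content of the proof.
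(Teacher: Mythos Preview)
Your proposal has the right ingredients, but the specific Hales--Jewett step you describe does not work, and the hedge in your final paragraph is precisely where the actual argument lives.

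With alphabet $[t]$ and the colouring $w\mapsto c(y_w)$, every $y_w$ has pattern $1^N$. A monochromatic combinatorial line therefore hands you $t$ vectors of pattern $1^N$ sharing a colour; it says nothing whatsoever about the colours of their $t$-fold sums, whose patterns are concatenations of compositions of $t$ and vary with the multiset $\{\lambda_1,\dots,\lambda_t\}$. Lemma~\ref{patterns} only tells you that colour is determined by pattern, so different multisets give different patterns and a priori different colours. The clause ``by Lemma~\ref{patterns} all such sums have a common colour'' is a non sequitur. Your ``extension to infinite $X$'' step is correspondingly muddled: you have $t$ line-vectors, not one template to translate, and it is unclear what the resulting infinite set is.

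The paper carries out exactly the fix you gesture at, but with the missing idea made explicit: take the Hales--Jewett alphabet to be $\Pi$, the set of compositions of $t$, and colour a point $(\pi_1,\dots,\pi_N)\in\Pi^N$ by $c(x)$ for any $x$ in the span of $A$ with pattern equal to the concatenation $\pi_1\pi_2\cdots\pi_N$; this is well-defined once Lemma~\ref{patterns} has been applied to the finite set $\Pi'$ of all such concatenations. Thus the cube encodes \emph{sum}-patterns, not summand-patterns. Hales--Jewett now yields a monochromatic line $L$ with active coordinate set $J$, and the infinite set $X=\{x_i:i<\omega\}$ is constructed \emph{after} the line is found: on each active stretch $r\in J$ let $x_i$ place a $1$ at position $i$; on each inactive stretch with constant value $\pi$ let every $x_i$ place the rescaled entries $\pi_s/t$. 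Then any $x_{i_1}+\cdots+x_{i_t}$ has pattern $\pi$ on each inactive stretch and, on every active stretch, the \emph{same} composition of $t$ (that of the multiset $\{i_1,\dots,i_t\}$) --- which is exactly how the wildcard of a combinatorial line behaves. Hence the sum's pattern is a point of $L$, and $X+\cdots+X$ is monochromatic. No separate extension step is needed: the $x_i$ are indexed by $\omega$ from the start.
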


\begin{proof}
Let $c$ be the given $k$-colouring of $\vs$.

Let $\Pi$ be the set of all patterns of 
the form $x_1x_2\ldots x_n$ where $x_1$, $x_2$, $\ldots\,$, $x_n$ are 
positive integers summing to $t$.  Note that $\Pi$ is finite.
Let $N$ be a positive integer such that whenever $\Pi^N$ is $k$-coloured
it contains a monochromatic combinatorial line.  (Such $N$ exists by
the Hales-Jewett Theorem.)  Let $\Pi'$ be the set of patterns obtained
by concatenating $N$ patterns from $\Pi$.

By Lemma \ref{patterns}, there exist a subset $A\subset B$ with $|A|=\aleph_1$ 
and colours $c_\pi$ ($\pi\in\Pi'$) such that if $x$ is
in the span of $A$ and has pattern $\pi$ then $c(x)=c_\pi$.  We induce
a colouring of $\Pi^N$ by giving $(\pi_1,\ldots\,,\pi_N)\in\Pi^N$ the
colour of any $x$ in the span of $A$ with pattern $\pi_1\pi_2\ldots\pi_N$.
(Note that this does not depend on the choice of $x$).

We may now find a monochromatic combinatorial line $L$ in $\Pi^N$.  Let
$J$ be the set of active coordinates of $L$ and, for each $\pi\in\Pi$,
let $I_\pi$ be the set of inactive coordinates where $L$ takes constant
value $\pi$.  (Note that we take our coordinates to range from 0 to $N-1$.)

Let $C$ be a subset of $A$ of order-type $\omega N$ and list the elements
of $C$ in order as $f_0$, $f_1$, $f_2$, $\ldots\,$. Let $X=\{x_i:i<\omega\}$
where
$$x_i=\sum_{r\in J}f_{\omega r+i}+\sum_{\pi\in\Pi}\sum_{r\in I_\pi}
\sum_{s=1}^{\ell(\pi)}\frac{\pi_s}{t}f_{\omega r+s}.$$
Then each element of $\underbrace{X+X+\cdots+X}_t$ has pattern in $L$
and thus $\underbrace{X+X+\cdots+X}_t$ is monochromatic.
\end{proof}

We remark that, exactly as the proof of Theorem 2 was adapted to yield
Theorem 3, we may similarly adapt the proof of Theorem 4 to give:

\begin{theorem}
Let $k$ and $t$ be positive integers and let $\kappa$ be an infinite cardinal.
Then there is an infinite cardinal $\lambda$ such that whenever
$\Q^\lambda$ is $k$-coloured there is an infinite set $X\subset\Q^\lambda$
with $|X|=\kappa$ and $\underbrace{X+X+\cdots+X}_t$ monochromatic.
\end{theorem}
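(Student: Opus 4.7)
The plan is to combine the two adaptations already exhibited in the paper: the cardinality upgrade that took Theorem 2 to Theorem 3, and the Hales-Jewett argument used in Theorem 4. Each ingredient of the proof of Theorem 4 scales cleanly from $\aleph_1$ and $\omega$ up to $\kappa^+$ and $\kappa$, provided we take $\lambda$ large enough.

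The first step is a mild strengthening of Lemma \ref{patterns}: working in $\Q^\lambda$ with $\lambda=\sup\{\kappa,2^\kappa,2^{2^\kappa},\ldots\,\}$, we show that for every finite set $\Pi$ of patterns there is a subset $A\subset B$ of cardinality $\kappa^+$ such that the colour of any $x$ in the span of $A$ with pattern $\pi\in\Pi$ depends only on $\pi$. The proof is identical to that of Lemma \ref{patterns}, except that we apply the \erdos-Rado theorem with parameter $\kappa$: a $k^n$-colouring of the $r$-subsets of a set of size $\exp_{r-1}(\kappa)^+$ admits a monochromatic subset of size $\kappa^+$, and our choice of $\lambda$ ensures $\exp_{r-1}(\kappa)^+<\lambda$ for every finite $r$ that arises.

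Second, we repeat the setup of the proof of Theorem 4. Let $\Pi$ consist of all patterns of positive integers summing to $t$, choose $N$ from the Hales-Jewett theorem so that every $k$-colouring of $\Pi^N$ contains a monochromatic combinatorial line, and let $\Pi'$ denote the set of concatenations of $N$ patterns from $\Pi$. Apply the strengthened lemma to $\Pi'$ to obtain $A\subset B$ of cardinality $\kappa^+$, induce a $k$-colouring of $\Pi^N$ exactly as in Theorem 4, and extract a monochromatic combinatorial line $L$ with active coordinates $J$ and, for each $\pi\in\Pi$, inactive coordinates $I_\pi$ constantly equal to $\pi$.

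Finally, pick $C\subset A$ of order-type $\kappa\cdot N$ (ordinal product, which fits inside $A$ since $|A|=\kappa^+>\kappa\cdot N$), list its elements in order as $(f_\alpha)_{\alpha<\kappa\cdot N}$, and define, for each $i<\kappa$,
$$x_i=\sum_{r\in J}f_{\kappa r+i}+\sum_{\pi\in\Pi}\sum_{r\in I_\pi}\sum_{s=1}^{\ell(\pi)}\frac{\pi_s}{t}f_{\kappa r+s}.$$
Set $X=\{x_i:i<\kappa\}$. Then $|X|=\kappa$, and the verification that every element of $\underbrace{X+X+\cdots+X}_t$ has pattern lying on $L$ proceeds exactly as in Theorem 4, so this sumset is monochromatic. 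The only real obstacle is bookkeeping: we must keep the \erdos-Rado bound on $\lambda$, the finite Hales-Jewett parameter $N$, and the ordinal indexing $\kappa\cdot N$ all compatible, but since $N$ depends only on $k$ and $t$ while $\kappa$ is infinite, no further set-theoretic assumptions are needed.
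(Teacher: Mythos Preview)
Your proposal is correct and is precisely the adaptation the paper has in mind: it replaces $\bo$ by $\lambda=\sup\{\kappa,2^\kappa,2^{2^\kappa},\ldots\}$, upgrades Lemma~\ref{patterns} via \erdos--Rado to produce $A$ of size $\kappa^+$, and then reruns the Hales--Jewett argument of Theorem~4 with stretches of order-type $\kappa$ in place of $\omega$. The paper itself gives only the one-line remark that this works with the stated value of $\lambda$, and your write-up is exactly the intended expansion.
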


As with Theorem 3, it suffices to take
$$\lambda=\sup\{\kappa,2^\kappa,2^{2^\kappa},\ldots\}.$$
\end{section}

\end{document}